\documentclass[a4paper,12pt]{article} 

\usepackage{cmap}					
\usepackage{mathtext} 				
\usepackage[T2A]{fontenc}			
\usepackage[utf8]{inputenc}			
\usepackage[english,russian]{babel}	
\usepackage{geometry}

\usepackage{amsmath,amsfonts,amssymb,amsthm,mathtools,amsthm} 
\usepackage{icomma} 
\usepackage[all]{xy}
\mathtoolsset{showonlyrefs=true} 

\usepackage{euscript}	 
\usepackage{mathrsfs} 
\theoremstyle{definition}
\newtheorem{theorem}{Theorem}
\newtheorem*{lemma}{Composition lemma}
\newtheorem*{lemmas}{Lemma on factor algebras}
\newtheorem*{lemmar}{Extension lemma}

\newtheorem*{theoremr}{Theorem about series}
\newtheorem{definition}{Definition}
\newtheorem*{theor}{Theorem}
\newtheorem*{theorpodprost}{The theorem on large subspaces}

\newtheorem*{theormakarenkohuhro}{Khukhro-Makarenko theorem}
\newtheorem{corollary}{Corollary}

\newtheorem*{sled}{Corollary}

\newtheorem*{theormakarenkoshum}{Makarenko-Shumyatsky theorem}

\DeclareMathOperator{\Ker}{\mathop{Ker}}

\DeclareMathOperator{\End}{\mathop{End}}

\DeclareMathOperator{\codim}{\mathop{codim}}

\DeclareMathOperator{\Aut}{\mathop{Aut}}

\newcommand*{\hm}[1]{#1\nobreak\discretionary{}
{\hbox{$\mathsurround=0pt #1$}}{}}

\usepackage{graphicx}  
\graphicspath{{images/}{images2/}}  
\setlength\fboxsep{3pt} 
\setlength\fboxrule{1pt} 
\usepackage{wrapfig} 

\usepackage{array,tabularx,tabulary,booktabs} 
\usepackage{longtable}  
\usepackage{multirow} 

\author{\LaTeX{} в Вышке}
\title{1.2 Математика в \LaTeX}
\date{\today}

\begin{document} 

\begin{center}
	\textbf{Khukhro-Makarenko type theorems for algebras}
\end{center}

\begin{center}
	\ Elizaveta Frolova
\end{center}

\begin{center}
	\textit{ \footnotesize  Faculty of Mechanics and Mathematics \\
		\footnotesize Moscow State University \\
		\footnotesize Moscow 119991, Leninskie gory, MSU \\
		\footnotesize elizaveta.fr@gmail.com}
\end{center} 

\footnotesize 
We prove
some anologues for algebras
of recent group-theoretic results 
(due to 
Khukhro,
Klyachko, 
Makarenko,
Milentyeva,
and
Shumyatsky)
on large characteristic subgroups 
satisfying a given property.
\\

\noindent \textbf{0. Introduction} \\

\begin{theormakarenkohuhro}
	\textit{If a group $G$ contains a finite-index subgroup satisfying 
		an outer commutator identity, $G$ contains a 
		characteristic finite-index subgroup satisfying this identity.} 
\end{theormakarenkohuhro}

An \emph{outer \({\rm or} \ multilinear\) commutator identity} is an
identity of the form \\ $[\dots[x_1,\dots,x_t]\dots]=1$ with some meaningful
arrangement of brackets, where all letters $x_1,\dots,x_t$ are different.
Examples of such identities are solvability, nilpotency,
centre-by-metabelianity, etc. A formal definition looks as follows. Let
$F(x_1,x_2,\dots)$ be a free group of countable rank.  An \emph{outer
	commutator of weight 1} is just a letter $x_i$. An \emph{outer commutator
	of weight $t>1$} is a word of the form
$w(x_1,\dots,x_t)=[u(x_1,\dots,x_r),v(x_{r+1},\dots,x_t)]$, where $u$ and
$v$ are outer commutators of weights $r$ and $t-r$, respectively. An
\emph{outer commutator identity} is an identity of the form $w=1$, where
$w$ is an outer commutator.

In [1], ``multilinear properties" \ were 
introduced and a theorem containing, as 
special cases, all results similar to the Khukhro--Makarenko theorem
was proved.

In [3] and [4], an analogue of the Khukhro--Makarenko theorem for algebras 
was proved:

\begin{theor} 
	\textit{Let $G$ be an algebra 
		(possibly 
		non-associative)
		over a field. If $G$ contains a finite-codimension subspace $N$   
		on which a multilinear identity $w(x_1, \ldots, x_t) = 0$ holds, then 
		$G$ contains a 
		finite-codimension 
		subspace $H$ invariant under all surjective 
		endomorphisms and satisfying the same identity.} 
\end{theor}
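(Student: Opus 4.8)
\emph{Sketch of the intended argument.}
The plan is to realize $H$ as an intersection of preimages of one fixed, minimal $w$-vanishing subspace. Among all subspaces $U\subseteq G$ with $w(U)=0$ — meaning $w(u_1,\dots,u_t)=0$ for all $u_i\in U$ — choose $N$ of least codimension $m^{*}$; this is possible because these codimensions form a nonempty set of nonnegative integers and hence have a minimum. Let $\mathcal E$ be the monoid of all surjective endomorphisms of $G$ (it contains $\mathrm{id}$), and put
\[ H=\bigcap_{\psi\in\mathcal E}\psi^{-1}(N). \]
Then $H\subseteq\mathrm{id}^{-1}(N)=N$, and since $V\subseteq U$ and $w(U)=0$ force $w(V)=0$, we get $w(H)=0$. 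For invariance, fix $\psi_0\in\mathcal E$: for any $\phi\in\mathcal E$ the composite $\phi\psi_0$ again lies in $\mathcal E$, so $H\subseteq(\phi\psi_0)^{-1}(N)=\psi_0^{-1}\bigl(\phi^{-1}(N)\bigr)$, whence $\psi_0(H)\subseteq\psi_0\psi_0^{-1}\bigl(\phi^{-1}(N)\bigr)=\phi^{-1}(N)$ using that $\psi_0$ is onto; intersecting over $\phi$ gives $\psi_0(H)\subseteq H$. Thus $H$ is a $w$-vanishing subspace invariant under every surjective endomorphism, and the whole theorem comes down to showing that $H$ has finite codimension. (Nothing so far uses associativity.)

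This last point is the crux and is not automatic. Each $\psi^{-1}(N)$ does have codimension exactly $m^{*}$, because a surjective $\psi$ induces an isomorphism $G/\psi^{-1}(N)\cong G/N$; but an infinite intersection of subspaces of a fixed finite codimension can have infinite codimension. I would reduce the claim to a uniform bound: there is a number $B=B(m^{*},t)$ with $\codim\bigl(\psi_1^{-1}(N)\cap\dots\cap\psi_k^{-1}(N)\bigr)\le B$ for every finite family $\psi_1,\dots,\psi_k\in\mathcal E$. Granting it, choose a finite sub-intersection $H_0=\psi_1^{-1}(N)\cap\dots\cap\psi_r^{-1}(N)$ of maximal codimension; for any $\psi\in\mathcal E$ the subspace $H_0\cap\psi^{-1}(N)$ is again such a finite sub-intersection, so $\codim\bigl(H_0\cap\psi^{-1}(N)\bigr)\le\codim H_0$, while $H_0\cap\psi^{-1}(N)\subseteq H_0$ gives the reverse inequality; hence $H_0\subseteq\psi^{-1}(N)$ for every $\psi$, so $H_0=\bigcap_{\psi}\psi^{-1}(N)=H$, which is therefore of finite codimension.

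Everything then rests on the uniform bound, which I expect to be the main obstacle; I would try to obtain it by linearizing the identity and running a collecting process, using the Composition lemma and the Lemma on factor algebras to keep control in the successive quotient algebras. The lever is that for any subspace $U$ with $w(U)=0$, fixing a complement $G=U\oplus C$ with $\dim C=\codim U$ and expanding each argument of $w$ along $G=U\oplus C$ exhibits, by multilinearity, every $w$-value of $G$ as a sum of $2^{t}-1$ terms in each of which at least one argument is taken from the finite-dimensional space $C$; together with $\psi_i\bigl(\psi_i^{-1}(N)\bigr)=N$ and $w(N)=0$, this should let one pass from $G$ to $G\big/\bigl(\psi_1^{-1}(N)\cap\dots\cap\psi_j^{-1}(N)\bigr)$ and estimate, at each stage, by how much one more translate can push the codimension up. The delicate part is exactly to make this estimate stabilize — to show that beyond boundedly many steps each further translate $\psi_{k+1}^{-1}(N)$ already contains the intersection built so far, so that the cap $B$ is genuinely independent of $k$. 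With that bound in hand, the bookkeeping of the previous paragraph completes the proof.
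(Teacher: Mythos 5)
Your reduction and bookkeeping are correct as far as they go: given the uniform bound, the argument that $H=\bigcap_{\psi}\psi^{-1}(N)$ equals a finite sub-intersection of maximal codimension, hence is of finite codimension, is sound, and the verifications that $w(H)=0$ and that $\psi_0(H)\subseteq H$ for every surjective $\psi_0$ are fine. But everything has been shifted onto the uniform bound, and there the proposal stops: the ``linearize, collect, pass to quotients'' paragraph is not an argument, it is a restatement of the difficulty. The multilinear expansion $G=U\oplus C$ shows that any $w$-value decomposes into $2^{t}-1$ terms with at least one argument in the finite-dimensional $C$, but this controls $w(G,\dots,G)$ modulo $w(U,\dots,U)$, not the codimension of $\psi_1^{-1}(N)\cap\dots\cap\psi_k^{-1}(N)$ as $k$ grows; nothing in the sketch explains why the process ``stabilizes.'' That stabilization \emph{is} the Khukhro--Makarenko phenomenon, so the proposal has, in effect, reduced the theorem to itself. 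There is also a structural worry you have not addressed: your $H$ is by construction the largest $\mathcal E$-invariant subspace \emph{inside} the chosen minimal-codimension $N$, but the theorem only asserts the existence of some finite-codimension invariant $w$-vanishing subspace, which in general need not sit inside $N$; it is not obvious (and you give no argument) that the maximal invariant subspace of $N$ cannot have infinite codimension even when the theorem holds.

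The paper's route is entirely different. It does not try to control intersections of preimages directly. Instead it invokes the abstract lattice-theoretic Theorem~1 of Klyachko and Milentyeva (from~[1]): one works in the Noetherian lattice generated by the images of $N$ under the relevant endomorphisms, shows that the predicate ``$w(N_1,\dots,N_t)$ lies in a finite sum of images of $w(N,\dots,N)$'' is polymonotone, multilinear, and invariant, and reads off an invariant element $H_t$ together with an explicit codimension bound $\codim H_t\le f^{t-1}(\codim N)$ coming from the axioms of a generalized codimension. Vanishing of $w$ on $H_t$ is then immediate, since $w(N,\dots,N)=0$ and a finite sum of images of $\{0\}$ is $\{0\}$. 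That machinery is precisely what replaces your missing uniform bound; without it, or an equivalent substitute, the proposal is incomplete.
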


The present paper deals with algebras using the approach 
developed in [1] for groups.

Section 1 contains basic definitions and the formulation of the main 
theorem of [1].

In Section 2, we prove the theorem on large subspaces, which is an 
analogue for algebras of Klyachko and Milentyeva's theorem.

The theorem on large subspaces has two interesting corollaries. The first 
one was known earlier [4]. The second 
corollary is a new result.

Section 3 
contains an analogue for algebras of
Makarenko--Shumyatsky's theorem from [5]:

\begin{theormakarenkoshum} 
	\textit{Let $G$ be a locally finite group 
		containing a normal subgroup $N$ of a finite index $n$ that has a 
		normal series} 
	\[ N = N_0 \ge N_1 \ge \ldots \ge N_k = 1 \]
	\textit{such that
		each quotient $N_i/N_{i+1}$ is either locally nilpotent or 
		satisfies an outer commutator law $w_i \equiv 1$. Then $G$ contains a 
		finite-index characteristic subgroup $H$ having 
		a characteristic 
		series} 
	\[ H = H_0 \ge H_1 \ge \ldots \ge H_k = 1 \]
	\textit{such that 
		the quotients $H_i / H_{i+1}$ have the same properties as the quotients 
		$N_i / N_{i+1}$, that is, if $N_i / N_{i+1}$ is locally nilpotent, then 
		so is $H_i / H_{i+1}$, and if $N_i / N_{i+1}$ satisifes an outer 
		commutator identity $w \equiv 1$, then so does $H_i / H_{i+1}$.} 
\end{theormakarenkoshum}

The author thanks A. A. Klyachko for usefull discussions. \\

\noindent \textbf{1. Basic concepts}
\begin{definition}		
	An abstract class of algebras $\mathcal{K}$ is called \textit{radical} if:
	
	1) every ideal of an algebra from $ \mathcal {K} $ lies in $ \mathcal {K} $;
	
	2) if $ A = I_1 + I_2 $ where $I_1, I_2 \triangleleft A$, $I_1, I_2 \in \mathcal{K}$ then $A \in \mathcal{K}$.
	\end{definition}

\begin{definition}
	\textit{The formation (the coradical class)} is an abstract class of algebras $ \mathcal {K} $ such that:
	
	1) the factor algebra (factor module) of an algebra from $ \mathcal {K} $ lies in $ \mathcal {K} $;
	
	2) if $A/I_1 \in \mathcal{K}$ and $A/I_2 \in \mathcal{K}$ then $A/(I_1 \cap I_2) \in \mathcal{K}$. 
\end{definition}

\begin{definition}
\textit{A semilattice} is a partially ordered set $\mathcal{L}$ in which every finite subset $\mathcal{N} \subseteq \mathcal{L}$ has a least upper bound $\sup \mathcal{N} \in \mathcal{L}$. 

\textit{A directed semilattice} is a semilattice which is a downwardly directed partially ordered set; this means that for any finite set $\sup \mathcal{N} \in \mathcal{L}$  there is an element  $\inf \mathcal{N} \in \mathcal{L}$ such that  $\inf \mathcal{N} \le N$ for any $N \in \mathcal{N}$ ($\sup$ is the least upper bound, $\inf$ is some lower bound).

A semilattice $\mathcal{L}$  is called \textit{Noetherian} if all increasing chains in it are terminated, that is, there are no infinite chains of the form  $N_1 < N_2 < \ldots$ where $N_i \in \mathcal{L}$.	

A semilattice  $\mathcal{L}$ is called \textit{a lattice} if every finite subset $\mathcal{N} \subseteq \mathcal{L}$ has a greatest lower bound (which we denote by  $\inf \mathcal{N}$ in this case).
\end{definition}

\begin{definition}
A predicate $\mathcal{P}$  on a semilattice $\mathcal{L}$ is called \textit{(poly) monotone} if, from the validity of the property $\mathcal{P}(N_1, \ldots, N_t)$, where  $N_i \in \mathcal{L}$,  it follows that $\mathcal{P}(N'_1, \ldots, N'_t)$  is also true for any  $N'_i \le N_i$. 

A property $\mathcal{P}$ is called \textit{multilinear} if for any $i$  $\mathcal{P}(N_1, \ldots, N_{i-1}, N'_i, N_{i+1}, \ldots, N_t)$ and \\ $\mathcal{P}(N_1, \ldots, N_{i-1}, N''_i, N_{i+1}, \ldots, N_t)$ are satisfied, then a property \\ $\mathcal{P}(N_1, \ldots, N_{i-1}, \sup(N_i', N_i''), N_{i+1}, \ldots, N_t)$ is satisfied.

A predicate $\mathcal{P}$ on a semilattice $\mathcal{L}$ is called \textit{comonotonic} if from the validity of the property $\mathcal{P}(N_1, \ldots, N_t)$, where  $N_i \in \mathcal{L}$  it follows that $\mathcal{P}(N'_1, \ldots, N'_t)$ is also true for any  $N'_i \ge N_i$. 

A predicate   $\mathcal{P}$ is called \textit{comultilinear} if for any $i$ $\mathcal{P}(N_1, \ldots, N_{i-1}, N'_i, N_{i+1}, \ldots, N_t)$ and $\mathcal{P}(N_1, \ldots, N_{i-1}, N''_i, N_{i+1}, \ldots, N_t)$ are satisfied, then a property \\ $\mathcal{P}(N_1, \ldots, N_{i-1}, \inf(N_i', N_i''), N_{i+1}, \ldots, N_t)$ is satisfied for some lower bound  $\inf(N_i', N''_i)$ (we will use the word colinear if $t = 1$).
\end{definition}

\begin{definition}
	We say that the semigroup of endomorphisms $Ф \subseteq \End \mathcal{L}$ of the semilattice $\mathcal{L}$ preserves the property $\mathcal{P}$ (or the property $\mathcal{P}$ is $Ф$-invariant) if $\mathcal{P}(N_1, \ldots, N_t)$ is followed by $\mathcal{P}(\varphi(N_1), \ldots, \varphi(N_t))$ for any $N_i \in \mathcal{L}$ and $\varphi \in Ф$.
	
	An element $N$ of a semilattice is called $Ф$-invariant, if  $\varphi(N) \le N$ for all $\varphi \in Ф$.
	
\end{definition}

\begin{definition}
Let $\mathcal{L}$  be a Noetherian directed semilattice and  $Ф \subseteq \End \mathcal{L}$ be a semigroup of its endomorphisms. The function $\codim \colon \mathcal{L} \to \mathbb{R}$ is called \textit{a generalized $Ф$-codimension} if it has the following properties:

1) $\codim N_1 \le \codim N_2$ if $N_1 \ge N_2$;

2) $\codim \varphi(N) \le \codim N$ for any $N \in \mathcal{L}$ and $\varphi \in Ф$;

3) $\codim \inf (N_1, N_2) \le \codim N_1 + \codim N_2$ for any $N_1, N_2 \in \mathcal{L}$ and for some lower bound $\inf (N_1, N_2)$;

4) in any family $N \subseteq \mathcal{L}$ there are $r \le \max \limits_{N \in \mathcal{N}} \codim N + 1$ elements $N_1, \ldots, N_r$ such that
\[
\sup \mathcal{N} = \sup (N_1, \ldots, N_r).
\] 	
\end{definition}

Next, we give the main tool from [1]:

\begin{theorem} \ [1] \
\textit{Let $\mathcal{L}$  be a Noetherian directed semilattice, $Ф \subseteq \End \mathcal{L}$ be a semigroup of its endomorphisms and $\mathcal{P}$ be a polymonotone multilinear  $Ф$-invariant predicate on $\mathcal{L}$. Then, if there exists an element $N \in \mathcal{L}$ with the property  $P(N, \ldots, N)$, then there exists an element $H \in \mathcal{L}$ such that}

1) \textit{the element $H$ has the same property: $P(H, \ldots, H)$;}

2) \textit{the element $H$ is $Ф$-invariant;}

3) \textit{if $\varphi(N) \le J$ for any $\varphi \in Ф$ and some $J \in \mathcal{L}$ then $H \le J$;}

4) \textit{if $\mathcal{L}$ is a lattice (that is, every finite set has a greatest lower bound) and $Ф$ consists of endomorphisms of the lattice (that is, maps that commute with the operations of taking the greatest lower bound of finite sets), then $ H $ is contained in the sublattice generated by the set \\ $\{\varphi(N); \varphi \in Ф \}$;}

5) \textit{if $\codim \colon \mathcal{L} \to \mathbb{R}$ is a generalized $Ф$-codimension, then   $\codim H \le  f^{t-1}(\codim N)$, where $f^k(x)$ means the k-th iteration of the function $f(x) = x(x+1)$.}\\
\end{theorem}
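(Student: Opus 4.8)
The plan is to induct on the arity $t$ of $\mathcal{P}$, with a single construction doing the work at each step: pass from $N$ to its $\Phi$-hull and peel off one argument of the predicate. First, using that $\mathcal{L}$ is Noetherian, every subfamily of $\mathcal{L}$ has a least upper bound, and it is already attained on a finite subfamily (form the finite joins $\sup(\varphi_1(N),\dots,\varphi_k(N))$; they form an ascending chain, which stabilises). So put $K_0:=\sup_{\varphi\in\Phi}\varphi(N)=\sup(\varphi_1(N),\dots,\varphi_m(N))$; by axiom~4 one may take $m\le\codim N+1$, and then $\codim K_0\le\codim\varphi_1(N)\le\codim N$ by axioms 1 and 2, while $K_0$ is $\Phi$-invariant since $\varphi(K_0)=\sup_{\psi}\varphi\psi(N)\le K_0$. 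Also put $L:=\inf(\varphi_1(N),\dots,\varphi_m(N))$ — the greatest lower bound if $\mathcal{L}$ is a lattice, otherwise a lower bound supplied by axiom~3 — so that $\codim L\le\sum_{j}\codim\varphi_j(N)\le m\,\codim N\le(\codim N+1)\,\codim N=f(\codim N)$.

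For $t=1$ the predicate is colinear, so $\mathcal{P}(\varphi_j(N))$ for every $j$ (which holds by $\Phi$-invariance of $\mathcal{P}$) gives $\mathcal{P}(K_0)$; the element $H:=K_0$ then satisfies 1)--5) directly, item 3) being the defining property of a least upper bound and item 5) reading $\codim H\le\codim N=f^0(\codim N)$. For $t\ge2$: feeding $\mathcal{P}(N,\dots,N)$ through $\Phi$-invariance of $\mathcal{P}$ gives $\mathcal{P}(\varphi_j(N),\dots,\varphi_j(N))$, then polymonotonicity (as $L\le\varphi_j(N)$) gives $\mathcal{P}(L,\dots,L,\varphi_j(N))$ with the first $t-1$ slots equal to $L$, and multilinearity in the last slot assembles these over $j=1,\dots,m$ into
\[
\mathcal{P}(\,\underbrace{L,\dots,L}_{t-1},\,K_0\,).
\]
Now define a $(t-1)$-ary predicate by $\mathcal{Q}(M_1,\dots,M_{t-1}):=\mathcal{P}(M_1,\dots,M_{t-1},K_0)$; it inherits polymonotonicity and multilinearity from $\mathcal{P}$, it is $\Phi$-invariant because $\mathcal{P}$ is and $K_0$ is $\Phi$-stable (this is the delicate point, addressed below), and $\mathcal{Q}(L,\dots,L)$ holds by the display. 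Applying the induction hypothesis to $(\mathcal{L},\Phi,\mathcal{Q},L)$ gives $H$ with $\mathcal{Q}(H,\dots,H)$, with $H$ $\Phi$-invariant, with $H\le J'$ for every upper bound $J'$ of $\{\varphi(L):\varphi\in\Phi\}$, with $H$ in the sublattice generated by $\{\varphi(L)\}$ (under the hypothesis of item 4), and with $\codim H\le f^{t-2}(\codim L)$.

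The last step is to carry 1)--5) back to $\mathcal{P}$. Since $\varphi(L)\le\varphi(\varphi_1(N))\le K_0$ for all $\varphi$, the element $K_0$ is an upper bound of $\{\varphi(L)\}$, so $H\le K_0$; hence $\mathcal{Q}(H,\dots,H)=\mathcal{P}(H,\dots,H,K_0)$ and polymonotonicity ($H\le K_0$) yield $\mathcal{P}(H,\dots,H)$ — item 1) — while item 2) is immediate. For 3): if $\varphi(N)\le J$ for all $\varphi$, then (as $\varphi\varphi_1\in\Phi$) $\varphi(L)\le\varphi(\varphi_1(N))\le J$ for all $\varphi$, so $H\le J$. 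For 4): each $\varphi(L)=\inf(\varphi\varphi_1(N),\dots,\varphi\varphi_m(N))$ lies in the sublattice generated by $\{\psi(N):\psi\in\Phi\}$, hence so does $H$. For 5): $\codim H\le f^{t-2}(\codim L)\le f^{t-2}(f(\codim N))=f^{t-1}(\codim N)$ because $f$ is increasing.

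The step I expect to be the real obstacle is the $\Phi$-invariance of the auxiliary predicate $\mathcal{Q}$ — equivalently, the stability of $K_0$ under $\Phi$. The definitions only supply $\varphi(K_0)\le K_0$, whereas the argument needs $\varphi(K_0)=K_0$ in order to pass from $\mathcal{P}(\varphi(M_1),\dots,\varphi(M_{t-1}),\varphi(K_0))$ to $\mathcal{P}(\varphi(M_1),\dots,\varphi(M_{t-1}),K_0)$. When $\Phi$ is a group of automorphisms this equality is automatic, and that is the case I would write out first; for an arbitrary semigroup of endomorphisms it is the one point requiring an extra idea — available in [1] — using that the relevant induced maps are surjective, so that no part of $\{\varphi(N)\}$ is lost under $\Phi$. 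Everything else is the bookkeeping above.
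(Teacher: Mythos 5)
The paper does not prove Theorem~1: it is quoted from [1] as ``the main tool,'' so there is no in-paper proof to compare your proposal against, and the review below addresses the proposal on its own merits.

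Your inductive scheme (pass to the $\Phi$-orbit join $K_0=\sup_{\varphi\in\Phi}\varphi(N)$, peel off the last argument, and recurse on the $(t-1)$-ary predicate $\mathcal{Q}(M_1,\dots,M_{t-1}):=\mathcal{P}(M_1,\dots,M_{t-1},K_0)$) is clean and the bookkeeping is correct: Noetherianity does give a finitely attained join; axioms 1--4 give $\codim K_0\le\codim N$ and $\codim L\le f(\codim N)$; items 3) and 4) carry back through the recursion as you describe; and the $t=1$ base case (where multilinearity is exactly what assembles $\mathcal{P}(\varphi_j(N))$ into $\mathcal{P}(K_0)$) works for an arbitrary semigroup $\Phi$. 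But the gap you flag in the last paragraph is genuine and is exactly where the argument breaks, so the proof as written establishes the theorem only when $\Phi$ is a group of automorphisms. For a semigroup of endomorphisms one has merely $\varphi(K_0)\le K_0$, and polymonotonicity passes only \emph{downward}: from $\mathcal{P}(\varphi M_1,\dots,\varphi M_{t-1},\varphi K_0)$ you can reach $\mathcal{P}$ at smaller arguments, not at the larger $K_0$. So $\mathcal{Q}$ need not be $\Phi$-invariant, the inductive hypothesis does not apply to it, and the induction does not close. The closing remark about ``the relevant induced maps being surjective'' does not repair this: the theorem is an abstract statement about semilattice endomorphisms, and surjectivity of $\varphi$ on an underlying algebra does not give $\varphi(K_0)=K_0$ on the semilattice $\mathcal{L}$ (it is easy to arrange $\varphi\Phi\subsetneq\Phi$, hence $\sup_{\psi}\varphi\psi(N)<\sup_{\psi}\psi(N)$). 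Since the present paper only ever invokes Theorem~1 with $\Phi=\Aut G$, your restricted version does cover all the paper's applications; as a proof of Theorem~1 as stated, however, it is incomplete, and the missing idea (handling a non-invertible $\Phi$) is presumably the substance of the argument in [1].
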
 \newpage

\noindent \textbf{2. The theorem on large subspaces}

By an algebra in the following theorem, we mean an algebra (possibly non-associative) over an arbitrary field.
A characteristic subspace in an algebra is a subspace that is invariant under all automorphisms of this algebra.

The formulation of the next theorem is given in [1]. \\

\begin{theorpodprost}
\textit{	Let $N$ be a subspace of an algebra $G$ and eaither}
	
	-- \textit{$N$ has finite codimension,}
	
	--  \textit{or $ N $ is a left ideal and a factor module of $G / N$ is a Noetherian,}
	
	-- \textit{or $ N $ is a two-sided ideal and the factor algebra $G / N$ satisfies the maximality condition for two-sided ideals.}
	
	\textit{Then $G$ contains the characteristic subspaces $ H_1, H_2, \ldots $ such that}
	
	1. \textit{the subspaces $ H_t $ are contained in the lattice of subspaces of the algebra $G$ generated by the images of $N$ for all possible automorphisms of the algebra $G$.} \\
	
	2. \textit{for any multilinear element $w(x_1, \ldots, x_n)$ (which having degree $n \le t$) of a free (non-associative) algebra, the set  $w(H_t, \ldots, H_t)$  is contained in the sum of a finite number of images of the set $w(N, \ldots, N)$ under automorphisms of the algebra $G$. \\
	($w(H_t, \ldots, H_t) \overset{\text{def}}{=} \{ w(h_1, \ldots, h_t) : h_i \in H_i \}$, $h_i$  are not necessarily different). }\\
	
	3. \textit{if codim is either an ordinary codimension (of subspace of G) or a generalized codimension defined on the lattice of ideals, the factor algebras (factor modules) of which lie in $\mathcal{F}$, then  $\codim H_t \hm{\le} f^{t-1}(\codim N)$ where $f^k(x)$ means the k-th iteration of the function $f(x) = x(x+1)$.}
\end{theorpodprost}

\begin{proof}[Proof]
	We apply Theorem 1. As a lattice $ \mathcal {L} $ we take the lattice of subspaces of $G$ generated by the images of $N$ for all possible automorphisms of $G$. As $Ф$ we take the automorphism group of $G$.
	
	We show that the lattice $ \mathcal {L} $ is Noetherian.
	
	Let $N$ be of finite codimension. All images of $N$ under automorphisms of $G$ have the same codimension (because this is an automorphism). The intersection of a finite number of subspaces of finite codimension will be of finite codimension (because the codimension of the intersection does not exceed the sum of codimensions). It turns out that the entire lattice $ \mathcal {L} $ consists of subspaces of finite codimension. This means that $ \mathcal {L} $ is Noetherian.
	
	Let $N$  be a left ideal and a factor module $G/N$ is Noetherian.  All properties are preserved under the automorphism and if the factor module $ G / N $ is Noetherian, then the image under the automorphism will have the same property.
	
	We verify that if $G/N_1$ and $G/N_2$ are Noetherian, then $G/(N_1 \cap N_2)$ is Noetherian. We consider the homomorphism
	\[
	\begin{aligned}
	G \xrightarrow{\varphi}& \ G/N_1 \oplus G/N_2 \\
	g \mapsto& \ (g+N_1, g+ N_2)
	\end{aligned}
	\]
	$\Ker \varphi = N_1 \cap N_2$. The image of the homomorphism $\varphi$ is isomorphic to the quotient by the kernel of this homomorphism. Hence, $G/(N_1 \cap N_2)$ is a submodule in the direct sum:
	\[
	G/(N_1 \cap N_2) \subseteq G/N_1 \oplus G/N_2.
	\]
	The direct sum of Noetherian modules is Noetherian, the submodule of the Noetherian module is Noetherian. Consequently, $G/(N_1 \cap N_2)$ is Noetherian.
	
	Let $G/N_1$ and $G/N_2$ be Noetherian.  Then we prove that  $G/(N_1 + N_2)$ is Noetherian. We consider the map
	\[
	G/N_1 \to G/(N_1 + N_2)
	\]
	This is a surjective homomorphism, so $G/(N_1 + N_2)$ is a quotient of  $G/N_1$ on the kernel. A factor module of a Noetherian module is Noetherian. Therefore, $G/(N_1 + N_2)$ is Noetherian.
	
	It follows that the lattice $\mathcal{L}$ is Noetherian. Let us prove this by contradiction. We take an infinitely expanding chain in G:
	\[
	N_1 \subset N_2 \subset \ldots \ \  \ (N_i \in \mathcal{L}).
	\]
	Then,
	\[
	\{0 \} \subset N_2/N_1 \subset N_3 /N_1 \subset \ldots
	\]
	This is a chain in $G/N_1$. But $G/N_1$ is Noetherian. Hence, at some point this chain ends:
	\[
	\{0 \} \subset N_2/N_1 \subset N_3 /N_1 \subset \ldots \subset N_k/ N_1 = N_{k+1} / N_1 = \ldots
	\]
	Then, $N_k = N_{k+1} = \ldots$. Let us prove this.
	
	Notice that $N_k \subset N_{k+1} \subset \ldots$.  We show that the inclusion is also true in the opposite direction. For any element   $x \in N_{k+1}$  there exists an element $y \in N_k$ such that $x+ N_1 = y + N_1$ (because $N_k/ N_1 = N_{k+1} / N_1$). Then $x = y + n_1$, where $n_1 \in N_1$. But $y \in N_k, \ n_1 \in N_1 \subset N_k$, so $x \in N_k$. 
	
	Thus, the lattice $\mathcal{L}$ is Noetherian. 
	
	Similarly, if $N$ is a two-sided ideal and the factor algebra $ G / N $ satisfies the maximality condition for two-sided ideals, then the lattice $ \mathcal {L} $ is Noetherian. \\
	
As $P(N_1, \ldots, N_t)$ we take the property: \\
	
	for any multilinear element $w$ of a free (non-associative) algebra of degree at most $t$, the set $w(N_1, \ldots, N_n)$  is contained in the sum of a finite number of images of the set $w(N, \ldots, N)$ under automorphisms of the algebra $G$.
	
	Let us prove that the property $P$ is multilinear, that is, that if $P(N'_1, \ldots, N_t)$ holds and $P(N''_1, \ldots, N_t)$ holds, then it follows that $P(N'_1 + N''_1, \ldots, N_t)$ (and so for each argument).
	
	The property $P(N'_1, \ldots, N_t)$ is:
	\[
	w(N'_1, \ldots, N_n) \subseteq \varphi_1 (w(N, \ldots, N)) + \ldots + \varphi_k(w(N, \ldots, N)), \ \ \ (1)
	\]
	where $\varphi_i \in \Aut G$, $w$  is any multilinear element of a free (non-associative) algebra of degree at most $t$.
	
	The property $P(N''_1, \ldots, N_t)$ is:
	\[
	w(N''_1, \ldots, N_n) \subseteq \varphi_1 (w(N, \ldots, N)) + \ldots + \varphi_l(w(N, \ldots, N)), \ \ \ (2)
	\]
	$\varphi_i \in \Aut G$.
	
	The element $w$ is multilinear, then
	\begin{eqnarray}
	w(N'_1 + N''_1, \ldots, N_n) = w(N'_1, \ldots, N_n) + w(N''_1, \ldots, N_n) \overset{(1), (2)}{\subseteq} \\ \subseteq  \varphi_1 (w(N, \ldots, N)) + \ldots + \varphi_k(w(N, \ldots, N)) + \ldots + \varphi_l(w(N, \ldots, N)).
	\end{eqnarray}
	Hence, $P$ is a multilinear property. \\
	
	Let us prove that the property $P$ is monotone, that is, that if the property $P(N_1, \ldots, N_t)$ holds and $N_1 \supseteq \tilde{N}_1$ is a subspace,  then the property $P(\tilde{N}_1, \ldots, N_t)$ holds (and so for each argument). 
	
	The property $P(N_1, \ldots, N_t)$ means that
	\[
	w(N_1, N_2, \ldots, N_n) \subseteq \varphi_1 (w(N, \ldots, N)) + \ldots + \varphi_k(w(N, \ldots, N)).
	\]
	
	$N_1 \supseteq \tilde{N}_1$, then \[
	w(\tilde{N}_1, N_2, \ldots, N_n) \subseteq w(N_1, N_2, \ldots, N_n).
	\]
	Then we get:
	\begin{multline}
			w(\tilde{N}_1, N_2, \ldots, N_n) \subseteq w(N_1, N_2, \ldots, N_n) \subseteq \\ \subseteq \varphi_1 (w(N, \ldots, N)) + \ldots + \varphi_k(w(N, \ldots, N)).		
	\end{multline}

	Consequently, the property $P$ is monotone.\\
	
	Let us prove that the property $P$ is invariant under the authoromorphisms of the algebra $G$. It is necessary to show that if
	\[
	w(N_1, \ldots, N_n) \subseteq \varphi_1 (w(N, \ldots, N)) + \ldots + \varphi_k(w(N, \ldots, N)), \ \ \ \ \ \ \ \ \ \ \ \   (*) 
	\]
	then
	\[
	w( \varphi(N_1), \ldots, \varphi(N_n)) \subseteq \tilde{\varphi}_1 (w(N, \ldots, N)) + \ldots + \tilde {\varphi}_k(w(N, \ldots, N)). \ \ \ (**)
	\]
	
	We apply $ \varphi $ to both sides of (*):\[
	\varphi(w(N_1, \ldots, N_n)) \subseteq \varphi( \varphi_1 (w(N, \ldots, N))) + \ldots + \varphi( \varphi_k(w(N, \ldots, N))).
	\]
	Those, $\tilde{\varphi}_i = \varphi \circ \varphi_i$. And $\varphi(w(N_1, \ldots, N_n)) = w( \varphi(N_1), \ldots, \varphi(N_n)) $, since $\varphi$  is an automorphism  ($\varphi$ of the sum is the sum, $\varphi$ of the product is the product), so $\varphi$ is carried to each term. Therefore, (**) is satisfied. \\
	
	The conditions of the Theorem 1 are satisfied, so the conclusions are also satisfied.
	
We denote by $P_1$ the property $P(N_1, \ldots, N_t)$	 for $t=1$, by $P_2$ the property $P(N_1, \ldots, N_t)$	 for $t=2$, etc. We use Theorem 1. We take our original $N$, take $P$ as the property of $P_1$, and, by Theorem 1, we obtain an element  $H$, which we denote by $H_1$. The subspace  $H_1$  is a characteristic subspace (this follows from part 2) of the Theorem 1).

Then we take the property $ P_2 $, by Theorem 1 we obtain the element $ H_2 $, etc.

Thus, we have found the characteristic subspaces $H_1, H_2, \ldots$.

The item 1. of the theorem on large subspaces follows from part 4) of the Theorem 1.

The item 2. of the theorem on large subspaces follows from part 1) of the Theorem 1.

The item 3. of the theorem on large subspaces follows from part 5) of the Theorem 1.

The theorem is proved.
\end{proof} 

\begin{corollary} \ [4] \
	\textit{Let $G$ be an algebra. If $G$ contains a subspace $N$ of finite codimension on which the multilinear identity  $ w (x_1, \ldots, x_n) = 0 $ (where $ x_1, \ldots, x_n \ in N $) holds, then $G$ contains a subspace $H$ (of finite codimension), which is invariant with respect to all automorphisms of the algebra that satisfies the same identity.} \\
\end{corollary}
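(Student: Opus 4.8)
The plan is to deduce this immediately from the theorem on large subspaces. The subspace $N$ has finite codimension, which is exactly the first of the three alternatives in the hypothesis of that theorem, so it applies and produces characteristic subspaces $H_1, H_2, \ldots$ of $G$ satisfying conclusions 1--3. Let $n$ be the degree of the given multilinear element $w$, and set $H := H_n$. I claim this $H$ works.

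First I would check that the identity $w = 0$ holds on $H$. By conclusion 2 of the theorem on large subspaces, applied with $t = n$ (legitimate, since $w$ is multilinear of degree $n \le t$), the set $w(H, \ldots, H) = w(H_n, \ldots, H_n)$ is contained in a sum of finitely many images $\varphi_1(w(N, \ldots, N)) + \ldots + \varphi_k(w(N, \ldots, N))$ under automorphisms $\varphi_i$ of $G$. Now the hypothesis that the identity $w(x_1, \ldots, x_n) = 0$ holds on $N$ is, by definition, exactly the statement $w(N, \ldots, N) = \{0\}$; hence each $\varphi_i(w(N, \ldots, N)) = \{0\}$, so the sum is $\{0\}$, and therefore $w(H, \ldots, H) = \{0\}$. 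That is, $w(x_1, \ldots, x_n) = 0$ for all $x_1, \ldots, x_n \in H$, as required.

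Next I would check that $H$ has finite codimension. By conclusion 3 of the theorem on large subspaces, taking $\codim$ to be the ordinary codimension of subspaces of $G$, we get $\codim H = \codim H_n \le f^{n-1}(\codim N)$ with $f(x) = x(x+1)$; since $\codim N < \infty$ and $f$ sends finite values to finite values, $\codim H < \infty$. Finally, $H = H_n$ is by construction one of the characteristic subspaces delivered by the theorem, hence invariant under all automorphisms of $G$. This completes the argument.

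There is essentially no real obstacle here: the entire content has been front-loaded into the theorem on large subspaces, and the corollary is the special case $\mathcal{P}$ = ``satisfies $w = 0$'' with $\codim$ the usual codimension. The only points requiring a moment of care are the bookkeeping — choosing the index $n$ to match the degree of $w$ so that conclusion 2 is available with $t = n$ — and the observation that ``the identity holds on $N$'' is literally the equality $w(N, \ldots, N) = \{0\}$, after which the passage from $w(N,\ldots,N)$ to $w(H,\ldots,H)$ is immediate.
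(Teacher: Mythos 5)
Your proof is correct and takes essentially the same route as the paper: set $H = H_n$ from the theorem on large subspaces, observe that $w(N,\ldots,N)=\{0\}$ forces $w(H,\ldots,H)=\{0\}$ via conclusion 2, and read off invariance and finite codimension from the theorem's guarantees. You spell out the codimension bound explicitly via conclusion 3, which the paper leaves implicit, but the argument is the same.
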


\begin{proof}[Proof]
	We set $H = H_t$, where $t = n$ is the degree of the element $w$ ($H_t$ is from the theorem on large subspaces).
	
	By the theorem on large subspaces, for any multilinear element $w(x_1, \ldots, x_n)$ (of degree  $n \le t$) of a free (non-associative) algebra, the set $w(H_t, \ldots, H_t)$ is contained in the sum of a finite number of images of the set $w(N, \ldots, N)$ under automorphisms of the algebra $G$. The zero image is zero under the automorphism, the sum of a finite number of zeros is also zero.  Therefore, $w(H_t, \ldots, H_t) = 0$. 	
\end{proof}

\begin{corollary}
	\textit{Let $G$ be an algebra, $N$ be a subspace of $G$ of finite codimension, and $ w $ be a multilinear element of a free (non-associative) algebra. Then if  $w(N, \ldots, N)$ is contained in some finite-dimensional subspace, then $G$ contains a subspace $ H $ of finite codimension that is invariant under all automorphisms of the algebra such that $w(H, \ldots, H)$ will also be contained in finite-dimensional subspace.}\\
\end{corollary}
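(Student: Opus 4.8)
The plan is to reduce everything to the theorem on large subspaces, exactly as in the proof of Corollary~1, with the word ``zero'' replaced by ``finite-dimensional''. Since $N$ has finite codimension, the first alternative in the hypothesis of the theorem on large subspaces is satisfied, so we obtain characteristic subspaces $H_1, H_2, \ldots$ of $G$ with properties 1--3. Set $t = n = \deg w$ and put $H = H_t$. By construction $H$ is invariant under all automorphisms of $G$, and by item~3 of the theorem (applied with the ordinary codimension of subspaces of $G$) we have $\codim H \le f^{\,t-1}(\codim N) < \infty$, so $H$ has finite codimension as required.

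It remains to control $w(H, \ldots, H)$. Let $V \subseteq G$ be a finite-dimensional subspace with $w(N, \ldots, N) \subseteq V$. By item~2 of the theorem on large subspaces, there are automorphisms $\varphi_1, \ldots, \varphi_k$ of $G$ with
\[
w(H, \ldots, H) \subseteq \varphi_1\bigl(w(N, \ldots, N)\bigr) + \cdots + \varphi_k\bigl(w(N, \ldots, N)\bigr) \subseteq \varphi_1(V) + \cdots + \varphi_k(V).
\]
Each $\varphi_i$ is in particular a linear bijection, so $\varphi_i(V)$ is a subspace with $\dim \varphi_i(V) = \dim V$. Hence $\varphi_1(V) + \cdots + \varphi_k(V)$ is a subspace of dimension at most $k \dim V < \infty$, and $w(H, \ldots, H)$ is contained in it. This is the desired conclusion.

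I do not expect a genuine obstacle here: the argument is the same shape as the proof of Corollary~1, and the only points worth a word of care are purely bookkeeping — namely that $w(N, \ldots, N)$ and $w(H, \ldots, H)$ are a priori only subsets (not subspaces) of $G$, so one must phrase things in terms of ``being contained in a finite-dimensional subspace'', and that the image of a subspace under an automorphism is again a subspace of the same (finite) dimension, which is what keeps the finite sum $\sum_i \varphi_i(V)$ finite-dimensional. Everything else is furnished directly by the theorem on large subspaces.
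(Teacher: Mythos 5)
Your proof is correct and follows the same route as the paper: set $H = H_t$ with $t = \deg w$, use item~2 of the theorem on large subspaces to enclose $w(H,\ldots,H)$ in a finite sum of automorphic images of $w(N,\ldots,N)$, and observe that each $\varphi_i(V)$ is finite-dimensional, so the sum is too. You simply spell out the bookkeeping (finite codimension via item~3, $\varphi_i$ linear bijections) a bit more explicitly than the paper's one-line argument.
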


\begin{proof}[Proof]
	We set $H = H_t$, where $t = n$ is the degree of the element  $w$ ($H_t$ from the theorem on large subspaces).
	
	If $w(N, \ldots, N)$ is contained in some finite-dimensional subspace, then $w(H_t, \ldots, H_t)$ will also be contained in the finite-dimensional subspace since image under the automorphism of a finite number of finite-dimensional spaces is a finite-dimensional space.
\end{proof} 
\newpage

\noindent \textbf{3. An analogue for algebras of Makarenko-Shumyatsky's theorem}

\begin{lemma} \ [1] \
\textit{Suppose that on some lattice there is a multilinear monotone predicate $\mathcal{Q}(M_1, \ldots, M_k)$ and a set of predicates \[\mathcal{R} = \left\{ \mathcal{R}_i \begin{pmatrix}
X_1, \ldots, X_l\\
Y
\end{pmatrix} \right\},\] which on the first line (that is, for any fixed second line) are multilinear and monotone, and on the second line (that is, for any fixed first line)  are colinear and comonotonic. Then the predicate
\begin{multline}
\mathcal{Q} \circ \mathcal{R}(N_1, \ldots, N_{kl}) = \\ = \left( \exists M_1, \ldots, M_k \ \ \mathcal{Q}(M_1, \ldots, M_k) \ \text{and for} \ i \in \{1, \ldots, k\} \ \mathcal{R}_i  \begin{pmatrix}
N_{(i-1)l+1}, \ldots, N_{li}\\
M_i
\end{pmatrix} \right)
\end{multline}\\
called the composition of predicates $\mathcal{Q}$ and $\mathcal{R}$ is multilinear and monotone.}
\end{lemma}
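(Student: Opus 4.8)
The plan is to verify the two closure properties — multilinearity and monotonicity — of the composite predicate $\mathcal{Q}\circ\mathcal{R}$ directly from the definitions, using the hypotheses on $\mathcal{Q}$ and on the family $\mathcal{R}_i$. I will treat the $kl$ arguments of $\mathcal{Q}\circ\mathcal{R}$ as arranged in $k$ blocks of $l$, where the $i$-th block $(N_{(i-1)l+1},\dots,N_{li})$ feeds the ``first line'' of $\mathcal{R}_i$ and the existentially quantified $M_i$ feeds its ``second line''.

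First I would prove monotonicity. Suppose $\mathcal{Q}\circ\mathcal{R}(N_1,\dots,N_{kl})$ holds, witnessed by $M_1,\dots,M_k$, and replace one argument $N_j$ by some $N_j'\le N_j$; say $N_j$ lies in block $i$. The witnesses $M_1,\dots,M_k$ are kept unchanged. Then $\mathcal{Q}(M_1,\dots,M_k)$ still holds, the predicates $\mathcal{R}_{i'}$ for $i'\ne i$ are untouched, and for the $i$-th predicate $\mathcal{R}_i\!\left(\begin{smallmatrix}N_{(i-1)l+1},\dots,N_{li}\\ M_i\end{smallmatrix}\right)$ we have decreased one first-line argument, so monotonicity of $\mathcal{R}_i$ on the first line gives the result with $N_j'$ in place of $N_j$. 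Hence $\mathcal{Q}\circ\mathcal{R}(N_1,\dots,N_j',\dots,N_{kl})$ holds.

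Next I would prove multilinearity. Fix an argument position $j$, lying in block $i$, and suppose $\mathcal{Q}\circ\mathcal{R}$ holds both with $N_j'$ in that position and with $N_j''$ in that position (all other arguments fixed); we must produce a witness for the value $\sup(N_j',N_j'')$ there. Let $M_1',\dots,M_k'$ witness the first instance and $M_1'',\dots,M_k''$ the second. For the new witness take $M_{i'}:=\sup(M_{i'}',M_{i'}'')$ for every $i'$ (or, more economically, $M_{i'}:=M_{i'}'$ for $i'\ne i$, since only block $i$ changes). Then $\mathcal{Q}(M_1,\dots,M_k)$ follows from $\mathcal{Q}(M_1',\dots,M_k')$ by repeated use of multilinearity of $\mathcal{Q}$ in each coordinate (passing from $M_{i'}'$ up to $\sup(M_{i'}',M_{i'}'')$), combined with monotonicity of $\mathcal{Q}$ where one only needs to go up to $M_{i'}''\le\sup$; and for the $\mathcal{R}$-conditions, those with index $i'\ne i$ hold because first line and second line are unchanged (using monotonicity on the second line if one chose to raise $M_{i'}$ to the sup). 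For the $i$-th condition, $\mathcal{R}_i$ is multilinear on the first line, so from $\mathcal{R}_i\!\left(\begin{smallmatrix}\dots,N_j',\dots\\ M_i'\end{smallmatrix}\right)$ and $\mathcal{R}_i\!\left(\begin{smallmatrix}\dots,N_j'',\dots\\ M_i''\end{smallmatrix}\right)$ — after first lifting the second lines to the common value $M_i=\sup(M_i',M_i'')$ via comonotonicity — we get $\mathcal{R}_i\!\left(\begin{smallmatrix}\dots,\sup(N_j',N_j''),\dots\\ M_i\end{smallmatrix}\right)$. This supplies the required witness, so $\mathcal{Q}\circ\mathcal{R}$ holds with $\sup(N_j',N_j'')$ in position $j$.

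The main obstacle is bookkeeping rather than conceptual: one must be careful that the existential witnesses produced in the two instances of the multilinearity hypothesis need not coincide, so the argument has to combine them via a supremum and then invoke comonotonicity of $\mathcal{R}_i$ on its second line to raise each $M_i'$ and $M_i''$ to that common bound before applying multilinearity of $\mathcal{R}_i$ on the first line — and simultaneously check that this raising does not break $\mathcal{Q}$, which is where multilinearity (not merely monotonicity) of $\mathcal{Q}$ is used. One also has to separate the cases according to whether the perturbed coordinate lies in the block attached to $\mathcal{R}_i$ or in a different block, though the latter case is immediate. Everything else is a routine unwinding of Definitions on (poly)monotone, multilinear, comonotonic and colinear predicates.
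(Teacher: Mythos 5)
Your monotonicity argument is correct: keep the same witnesses $M_1,\dots,M_k$, only the $i$-th predicate $\mathcal{R}_i$ sees a decreased first-line argument, and monotonicity of $\mathcal{R}_i$ on the first line finishes. (The paper itself gives no proof of this lemma; it cites [1], so there is no ``paper proof'' to compare against.)

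The multilinearity argument has a genuine gap, and it is exactly at the spot you flag as ``the main obstacle.'' You propose the witness $M_{i'}:=\sup(M_{i'}',M_{i'}'')$ (or $M_{i'}:=M_{i'}'$) for $i'\ne i$, and then assert that $\mathcal{Q}(M_1,\dots,M_k)$ ``follows from $\mathcal{Q}(M_1',\dots,M_k')$ by repeated use of multilinearity of $\mathcal{Q}$ in each coordinate, combined with monotonicity of $\mathcal{Q}$.'' This does not go through. In this paper (Definition~4), a polymonotone predicate is preserved when arguments are \emph{decreased}, not increased; and multilinearity of $\mathcal{Q}$ in coordinate $i'$ requires the two hypotheses to coincide in every coordinate except $i'$, whereas $(M_1',\dots,M_k')$ and $(M_1'',\dots,M_k'')$ may differ everywhere. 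The assertion is in fact false in general: on ideals of a Lie algebra $L=A\oplus B$ with $A$ nonabelian, let $\mathcal{Q}(M_1,M_2)=\bigl([M_1,M_2]=0\bigr)$, $M_1'=M_2''=A$, $M_2'=M_1''=B$. Then $\mathcal{Q}(M_1',M_2')$ and $\mathcal{Q}(M_1'',M_2'')$ hold, but $\mathcal{Q}\bigl(\sup(M_1',M_1''),\sup(M_2',M_2'')\bigr)=\mathcal{Q}(A+B,A+B)$ fails since $[A,A]\ne 0$. A telltale symptom is that your argument never invokes colinearity of the $\mathcal{R}_{i'}$ on the second line, although it is an explicit hypothesis of the lemma.

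The repair is to move the auxiliary witnesses \emph{down} rather than up: take $M_{i'}:=\inf(M_{i'}',M_{i'}'')$ for $i'\ne i$ and $M_i:=\sup(M_i',M_i'')$. Monotonicity of $\mathcal{Q}$ lowers the $i'$-th coordinates of both $\mathcal{Q}(M_1',\dots,M_k')$ and $\mathcal{Q}(M_1'',\dots,M_k'')$ to these common infima; the two resulting instances now agree in every coordinate except $i$, so multilinearity of $\mathcal{Q}$ in coordinate $i$ gives $\mathcal{Q}(M_1,\dots,M_k)$. For $i'\ne i$, the first line of $\mathcal{R}_{i'}$ is the same in both instances, so \emph{colinearity} on the second line yields $\mathcal{R}_{i'}$ with second line $\inf(M_{i'}',M_{i'}'')$. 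For $i'=i$, your own handling is correct: comonotonicity raises both second lines to $\sup(M_i',M_i'')$, and multilinearity of $\mathcal{R}_i$ on the first line produces the conclusion with $\sup(N_j',N_j'')$ in position $j$.
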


The proof of this lemma is given in [1]. \\

The following two lemmas are analogues (for algebras) of lemmas for groups stated in [1]. \\

\begin{lemmas}
\textit{Let G be an algebra, N, M be ideals in G. Each of the following two properties $\mathcal{A}(N,M)$ and $\mathcal{B}(N,M)$ can be written in the form $\mathcal{R}\begin{pmatrix}
N, \ldots, N \\
M
\end{pmatrix}$ where the predicat $\mathcal{R}$ on the lattice of ideals is monotone and multilinear on the first line, and on the second line it is comonotonic and colinear:}
\[
\begin{aligned}
\mathcal{A}(N,M) = \left( N/(N \cap M) \ \text{\textit{satisfies (fixed) multilinear identity}} \ w = 0 \right), \\
\mathcal{B}(N,M) = \left( N/(N \cap M) \ \text{\textit{belongs to a (fixed) radical formation}} \ \mathcal{F}\right).
\end{aligned}
\]

\end{lemmas}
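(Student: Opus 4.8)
The plan is to write down explicit predicates $\mathcal{R}_{\mathcal{A}}$ and $\mathcal{R}_{\mathcal{B}}$ and to verify the four required conditions one at a time; the only inputs beyond bookkeeping are the second isomorphism theorem, which for ideals $X,Y$ of a (non-associative) algebra gives $X/(X\cap Y)\cong(X+Y)/Y$ as algebras, and the observation that for an ideal $N$ and a multilinear element $w$ of the free non-associative algebra one has $w(N,\dots,N)\subseteq N$, so that $\mathcal{A}(N,M)$, i.e.\ $w(N,\dots,N)\subseteq N\cap M$, is equivalent to the single inclusion $w(N,\dots,N)\subseteq M$.

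For $\mathcal{A}$ I would take $l=\deg w$ and let $\mathcal{R}_{\mathcal{A}}\binom{X_1,\dots,X_l}{Y}$ be the assertion $w(X_1,\dots,X_l)\subseteq Y$; by the observation above $\mathcal{R}_{\mathcal{A}}\binom{N,\dots,N}{M}$ is exactly $\mathcal{A}(N,M)$. Monotonicity in the first line is immediate, since shrinking any $X_i$ shrinks the set $w(X_1,\dots,X_l)$. Multilinearity in the first line is the computation already used in the proof of the theorem on large subspaces: $w(\dots,X_i'+X_i'',\dots)\subseteq w(\dots,X_i',\dots)+w(\dots,X_i'',\dots)$, and a subspace $Y$ containing both sets on the right contains their sum, hence the set on the left. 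Comonotonicity in the second line ($Y\subseteq Y'$ together with $w(\dots)\subseteq Y$ gives $w(\dots)\subseteq Y'$) and colinearity in the second line (take $\inf=\cap$: $w(\dots)\subseteq Y'$ and $w(\dots)\subseteq Y''$ give $w(\dots)\subseteq Y'\cap Y''$) are both trivial.

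For $\mathcal{B}$ I would take $l=1$ and let $\mathcal{R}_{\mathcal{B}}\binom{X}{Y}$ be the assertion $(X+Y)/Y\in\mathcal{F}$; this makes sense because $X+Y$ is an ideal of $G$ containing $Y$, so $(X+Y)/Y$ is an ideal of the algebra $G/Y$, and by the isomorphism above $\mathcal{R}_{\mathcal{B}}\binom{N}{M}$ is $\mathcal{B}(N,M)$. For monotonicity in $X$: if $X'\subseteq X$ then $(X'+Y)/Y$ is an ideal of $(X+Y)/Y$, hence lies in $\mathcal{F}$ by property 1) in the definition of a radical class. For linearity in $X$: $(X'+X''+Y)/Y=(X'+Y)/Y+(X''+Y)/Y$ is a sum of two ideals each lying in $\mathcal{F}$, hence lies in $\mathcal{F}$ by property 2) for a radical class. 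For comonotonicity in $Y$: if $Y\subseteq Y'$ then $X/(X\cap Y')$ is a quotient of $X/(X\cap Y)\cong(X+Y)/Y$, hence lies in $\mathcal{F}$ by property 1) in the definition of a formation. For colinearity in $Y$: apply property 2) for a formation to the algebra $A=X$ with ideals $I_1=X\cap Y'$ and $I_2=X\cap Y''$, using $A/I_1\cong(X+Y')/Y'\in\mathcal{F}$ and $A/I_2\cong(X+Y'')/Y''\in\mathcal{F}$, to get $A/(I_1\cap I_2)=X/(X\cap Y'\cap Y'')\cong\bigl(X+(Y'\cap Y'')\bigr)/(Y'\cap Y'')\in\mathcal{F}$.

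There is no analytic difficulty here; the one thing worth getting right is the matching of the two halves of the definition of a radical formation to the required monotonicity pattern --- linearity along the first line is precisely what forces closure under \emph{sums of ideals} (the radical side), while colinearity along the second line is precisely what forces closure under \emph{intersections of kernels} (the formation side), so that both halves of the hypothesis on $\mathcal{F}$ are used and neither is redundant. A secondary point to watch is that $w(X_1,\dots,X_l)$ is a subset of $G$, not a subspace, so the multilinearity of $\mathcal{R}_{\mathcal{A}}$ is a set-theoretic inclusion which is only absorbed because its second argument $Y$ is a subspace.
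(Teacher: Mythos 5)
Your proposal is correct and follows essentially the same route as the paper: for $\mathcal{A}$ you take $\mathcal{R}\binom{X_1,\dots,X_l}{Y}=(w(X_1,\dots,X_l)\subseteq Y)$ and observe $w(N,\dots,N)\subseteq N$ to match $\mathcal{A}(N,M)$, and for $\mathcal{B}$ you take a predicate that is literally the same as the paper's $\mathcal{R}\binom{N}{M}=\bigl(N/(N\cap M)\in\mathcal{F}\bigr)$, just rewritten via the second isomorphism theorem as $(X+Y)/Y\in\mathcal{F}$, with the four monotonicity/linearity checks using the radical and formation halves of the hypothesis on $\mathcal{F}$ in exactly the way the paper does. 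The only difference is cosmetic: working with $(X+Y)/Y$ rather than $X/(X\cap Y)$ makes the linearity computation $(X'+X''+Y)/Y=(X'+Y)/Y+(X''+Y)/Y$ slightly more transparent than the paper's direct manipulation of $N_i/(N_i\cap M)$.
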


\begin{proof}[Proof]

Let us prove that for a property $\mathcal{A}$ the predicate
\[
\mathcal{R}\begin{pmatrix}
N_1, \ldots, N_t \\
M
\end{pmatrix} = \left( w(N_1, \ldots, N_t) \subseteq M \right)
\]
satisfies all conditions (where $N_1, \ldots, N_t, M \triangleleft G$).

Notice that
\[
A(N, M) = \mathcal{R}\begin{pmatrix}
N, \ldots, N \\
M
\end{pmatrix},
\]
since property $A(N, M)$  is that the quotient $N / (N \cap M)$ satisfies (fixed) multilinear identity $w = 0$. This means (by definition) that $w(N, \ldots, N) \subseteq N \cap M$. But in N the set $w(N, \ldots, N) $  is automatically bounded; hence, this is equivalent to the fact that  $w(N, \ldots, N) \subseteq M$.

Let us check the monotonicity of the first line.

The monotonicity of the first line means that if $\mathcal{R}\begin{pmatrix}
N_1, \ldots, N_t \\
M
\end{pmatrix}$ then $\mathcal{R}\begin{pmatrix}
N'_1, \ldots, N_t \\
M
\end{pmatrix}$ where $N'_1 \subseteq N_1$ (and so for each argument). Indeed, the property $\mathcal{R}\begin{pmatrix}
N_1, \ldots, N_t \\
M
\end{pmatrix}$ means that $\left( w(N_1, N_2 \ldots, N_t) \subseteq M \right)$. Since $N'_1 \subseteq N_1$ then $w(N'_1, N_2 \ldots, N_t) \subseteq w(N_1, N_2 \ldots, N_t)$. Hence, \\ $w(N'_1, N_2 \ldots, N_t) \subseteq M$, which means that the property  $\mathcal{R}\begin{pmatrix}
N'_1, \ldots, N_t \\
M
\end{pmatrix}$ is satisfied. Thus the monotonicity property of the first line is proved. \\

Let us check the multilinearity of the first line, i.e. that if the properties
\[
\mathcal{R}\begin{pmatrix}
N_1, \ldots, N'_i, \ldots, N_t \\
M
\end{pmatrix} \ \text{and} \ \mathcal{R}\begin{pmatrix}
N_1, \ldots, N''_i, \ldots, N_t \\
M
\end{pmatrix} \ \ \ \ \ (*)
\]
then the property $\mathcal{R}\begin{pmatrix}
N_1, \ldots, N'_i + N''_i, \ldots, N_t \\
M
\end{pmatrix}$, $\forall \ i$ is satisfied. 

(*) means that $w(N_1, \ldots, N'_i, \ldots, N_t) \subseteq M$ and $w(N_1, \ldots, N''_i, \ldots, N_t) \subseteq M$. Since $w$ a multilinear element, it follows that
\[
w(N_1, \ldots, N'_i, \ldots, N_t) + w(N_1, \ldots, N''_i, \ldots, N_t) = w(N_1, \ldots, N'_i + N''_i, \ldots, N_t) \subseteq M.
\]
Thus the multilinearity of the first line is proved. \\

Let us check the comonotonicity of the second line, i.e. that if the property $\mathcal{R}\begin{pmatrix}
N_1,  \ldots, N_t \\
M
\end{pmatrix}$ then the property $\mathcal{R}\begin{pmatrix}
N_1,  \ldots, N_t \\
M'
\end{pmatrix}$ where $M' \supseteq M$ is satisfied.

Indeed, the property $\mathcal{R}\begin{pmatrix}
N_1,  \ldots, N_t \\
M
\end{pmatrix}$ means that $w(N_1, \ldots, N_t) \subseteq M$ and since \\ $M \subseteq M'$ then $w(N_1, \ldots, N_t) \subseteq M'$ those the property $\mathcal{R}\begin{pmatrix}
N_1,  \ldots, N_t \\
M'
\end{pmatrix}$ is satisfied. \\

Let us check the colinearity of the second line, i.e. that if the properties
\[
\mathcal{R}\begin{pmatrix}
N_1,  \ldots, N_t \\
M_1
\end{pmatrix} \ \text{and} \ \mathcal{R}\begin{pmatrix}
N_1,  \ldots, N_t \\
M_2
\end{pmatrix} \ \ \ \ \ (**)
\]
are satisfied, then the property $\mathcal{R}\begin{pmatrix}
N_1,  \ldots, N_t \\
M_1 \cap M_2
\end{pmatrix}$ is satisfied. \\

(**) means that $w(N_1,  \ldots, N_t) \subseteq M_1$ and $w(N_1,  \ldots, N_t) \subseteq M_2$. Hence, \\ $w(N_1,  \ldots, N_t) \subseteq M_1 \cap M_2$, which means that the property $\mathcal{R}\begin{pmatrix}
N_1,  \ldots, N_t \\
M_1 \cap M_2
\end{pmatrix}$ is satisfied. \\ \\

For the property $\mathcal{B}$ we can take:
\[
\mathcal{R} \begin{pmatrix}
N \\
M
\end{pmatrix} = \mathcal{B}(N, M).
\]
Let's check the linearity of the first line, i.e. that if the properties
\[
\mathcal{R} \begin{pmatrix}
N_1 \\
M
\end{pmatrix} \ \text{and} \ \mathcal{R} \begin{pmatrix}
N_2 \\
M
\end{pmatrix} \ \ \ \ (3)
\]
are satisfied, then $\mathcal{R} \begin{pmatrix}
N_1 + N_2 \\
M
\end{pmatrix}$ is satisfied. \\

(3) means that $N_1/(N_1 \cap M) \in \mathcal{F}$ and $N_2/(N_2 \cap M) \in \mathcal{F}$. It is necessary to show that then $ (N_1 + N_2) / ((N_1 + N_2) \cap M) \in \mathcal{F}$.

Notice that
\[
N_1 / ((N_1 + N_2) \cap M \cap N_1) \subseteq (N_1 + N_2) / ((N_1 + N_2) \cap M).
\]
But $(N_1 + N_2) \cap M \cap N_1  = M \cap N_1$, because $N_1 + N_2$ larger than $N_1$. I.e.
\[
N_1 / (M \cap N_1) \subseteq (N_1 + N_2) / ((N_1 + N_2) \cap M).
\]
Similarly,
\[
N_2 / (M \cap N_2) \subseteq (N_1 + N_2) / ((N_1 + N_2) \cap M).
\]
Since the ideals $N_1, N_2$ gave the whole of the algebra in the sum, then their image will give a total of the image, therefore,
\[
(N_1 + N_2) / ((N_1 + N_2) \cap M) = N_1 / (M \cap N_1)  + N_2 / (M \cap N_2).
\]
It is known that  $N_1/(N_1 \cap M) \in \mathcal{F}$ and $N_2/(N_2 \cap M) \in \mathcal{F}$. Then by 2) from the definition of the radical class we obtain that $ (N_1 + N_2) / ((N_1 + N_2) \cap M) \in \mathcal{F}$. \\

Let us check the monotonicity of the first line, i.e. that if the property $\mathcal{R} \begin{pmatrix}
N \\
M
\end{pmatrix}$ is satified, then the property $\mathcal{R} \begin{pmatrix}
N' \\
M
\end{pmatrix}$ where $N' \subseteq N$ is satisfied.

It is known that $N / (N \cap M) \in \mathcal{F}$ and $N' \subseteq N$. Let us prove that then $N' / (N' \cap M) \in \mathcal{F}$. We consider the canonical homomorphism:\[
N \to N / (N \cap M).
\]
Under this homomorphism\[
N' \to N' / (N' \cap N \cap M),
\]
i.e. (because $N' \subseteq N$)
\[
N' \to N' / (N'  \cap M).
\]
Notice that $N' \triangleleft N$ (since $N', N \triangleleft G, N' \subseteq N$). Because under a surjective homomorphism the image of an ideal is an ideal then
\[
N' / (N' \cap M) \triangleleft N/ (N \cap M).
\]
Consequently, by item 1) from the definition of the radical class we obtain that $N' / (N' \cap M) \in \mathcal{F}$. \\

Let us check the colinearity of the second line. Let $N / (N \cap M_1) \in \mathcal{F}$ and \\ $N / (N \cap M_2) \in \mathcal{F}$. Let us prove that then $N / (N \cap (M_1 \cap M_2)) \in \mathcal{F}$. By item 2) of the definition of the coradical class we find that if $N / (N \cap M_1) \in \mathcal{F}$ and \\ $N / (N \cap M_2) \in \mathcal{F}$ then $N / ((N \cap M_1) \cap (N \cap M_2)) \in \mathcal{F}$ but $(N \cap M_1) \cap (N \cap M_2) \hm{=} N \cap (M_1 \cap M_2)$. Consequently, $N / (N \cap (M_1 \cap M_2)) \in \mathcal{F}$. \\

Let us check the comonotonicity of the second line. Let $N / (N \cap M) \in \mathcal{F}$ and $M' \supseteq M$. We prove that then $N / (N \cap M') \in \mathcal{F}$.

We consider the surjective homomorphism:
\[
N / (N \cap M) \to N / (N \cap M').
\]
We get that $N / (N \cap M')$ is the factor algebra of the algebra $N / (N \cap M)$.  By item 1) of the definition of the coradical class we obtain that $N / (N \cap M') \in \mathcal{F}$. 

The theorem is proved.
\end{proof}

\begin{lemmar}
\textit{Let $\mathcal{Q}(M_1, \ldots, M_l)$ is monotonic multilinear predicate on the lattice of ideals of the algebra $G$, $w$ is a multilinear element (of a free algebra) of degree d and $\mathcal{F}$ is a radical formation. Then each of the following two ideal's properties (of the algebra $G$) $\mathcal{C}(N)$ and $\mathcal{D}(N)$ can be written in the form $\mathcal{P}(N, \ldots, N)$, where the predicate $P(N_1, \ldots, N_t)$ on the lattice of ideals is monotone and multilinear and $t = ld$ for the property $\mathcal{C}$ and $t = l$ for the property}\textit{ $\mathcal{D}$:
\[
\begin{aligned}
\mathcal{C}(N) =& \left( \exists M \triangleleft G \ \ M \subseteq N, \ \mathcal{Q}(M, \ldots, M) \ \text{and} \ N/M \ \text{satisfies the identity} \ w = 0 \right), \\
\mathcal{D}(N) =& \left( \exists M \triangleleft G \ \ M \subseteq N, \ \mathcal{Q}(M, \ldots, M) \ \text{and} \ N/M \in \mathcal{F} \right).
\end{aligned}
\] }
\end{lemmar}

\begin{proof}[Proof]
The property $\mathcal{C}(N)$ can be rewritten in the equivalent form:
\[
\mathcal{C}(N) = \left( \exists M \triangleleft G \ \ \mathcal{A}(N,M), \ \text{and} \  \mathcal{Q}(M, \ldots, M) \right),
\]
where $\mathcal{A}$ is from the lemma of factor algebras (if $M \subseteq N$ then $M \cap N = M$).

The last formula is equivalent to this:
\[
\mathcal{C}(N) = \left( \exists M_1, \ldots, M_l \triangleleft G \ \ \mathcal{A}(N,M_1), \ldots, \mathcal{A}(N,M_l) \ \text{and} \  \mathcal{Q}(M_1, \ldots, M_l) \right).
\]

To verify this (in the non-obvious direction) we set $M = \bigcap M_i$. Then the equivalence follows from the fact that the property $\mathcal{Q}$ is monotone  and the variety of algebras given by the identity $ w = 0 $ satisfies the definition of the formation. \\

By the lemma on factor algebras, we can rewrite this property in the form:
\[
\mathcal{C}(N) = \left( \exists M_1, \ldots, M_l \triangleleft G \ \ \ \ \ \mathcal{R}\begin{pmatrix}
N, \ldots, N \\
M_1
\end{pmatrix}, \ldots, \mathcal{R}\begin{pmatrix}
N, \ldots, N \\
M_l
\end{pmatrix} \ \text{and} \  \mathcal{Q}(M_1, \ldots, M_l) \right),
\]
where the predicate $\mathcal{R}$ on the lattice of ideals is monotone and multilinear in the first line and on the second line it is comonotonic and colinear.

The reference to the composition lemma completes the proof. For the property $\mathcal{D}$ the arguments are analogous. \\ 
\end{proof}

\begin{theoremr}
\textit{Let G be an algebra, N be an ideal in G, a factor algebra $G/N$ is finite-dimensional. Let N possesses a series
\[
\{0\} = A_0 \subseteq \ldots \subseteq A_n = N \ \ \ \ (A_i \triangleleft G)
\]
each of whose quotients  $A_i/A_{i-1}$ is either satisfies the multilinear identity $w_i = 0$, or is in the radical class  $\mathcal{K}_i$ and all these classes are also coradical simultaneously. Then $G$ contains an ideal $M$ such that $ G / M $ is finite-dimensional and $M$ is invariant under all automorphisms of $G$, and $M$ has the same property (that is, a series of the same length, each quotient of which satisfies the same identity or lies in the same class that the corresponding quotient of the original series).} \\
\end{theoremr}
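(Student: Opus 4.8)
\emph{Proof proposal (plan).}
The plan is to reduce everything to Theorem 1, exactly as in the proof of the theorem on large subspaces, but now on the lattice of finite-codimension ideals, with $Ф=\Aut G$ and with ``$M$ has a series of the prescribed type'' as the predicate; the real content is to show that this predicate is polymonotone, multilinear and $\Aut G$-invariant, which I would obtain by iterating the Extension lemma along the given series. First I would fix the lattice: let $\mathcal{L}$ be the set of ideals $J\triangleleft G$ with $\dim(G/J)<\infty$, ordered by inclusion, so that $\sup$ is the sum of ideals and $\inf$ the intersection, both of which stay in $\mathcal{L}$ (the codimension of a sum is at most that of each summand, that of an intersection at most the sum of the two); hence $\mathcal{L}$ is a downwardly directed lattice. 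It is Noetherian because along an ascending chain the integer $\dim(G/J)$ is non-increasing, and once it stabilizes the natural surjections $G/J_k\twoheadrightarrow G/J_{k+1}$ become isomorphisms, so the ideals themselves stabilize. The group $Ф=\Aut G$ acts on $\mathcal{L}$ by lattice endomorphisms: an automorphism sends an ideal to an ideal of the same codimension and commutes with $+$ and $\cap$. Finally $\codim J=\dim(G/J)$ is a generalized $Ф$-codimension: properties (1)--(3) are immediate, and for (4) one builds a subfamily of $\mathcal{N}$ by repeatedly adjoining an ideal not contained in the current partial sum, which strictly decreases the (non-negative) codimension, hence stops after at most $\max_{N\in\mathcal{N}}\codim N+1$ steps. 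Since $G/N$ is finite-dimensional, $N\in\mathcal{L}$.

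Next I would build the predicate recursively. Write $\pi_i$ for the property required of $A_i/A_{i-1}$ (either ``satisfies $w_i=0$'' or ``lies in the radical formation $\mathcal{K}_i$'', using that each $\mathcal{K}_i$ is simultaneously radical and a formation). Put $\mathcal{Q}_0(M)=\bigl(M=\{0\}\bigr)$, which is trivially monotone and linear, and for $1\le i\le n$ set
\[
\mathcal{Q}_i(M)=\Bigl(\exists\, M'\triangleleft G \colon M'\subseteq M,\ \mathcal{Q}_{i-1}(M') \text{ and } M/M' \text{ has property } \pi_i\Bigr).
\]
When $\pi_i$ is an identity this is precisely the property $\mathcal{C}$ of the Extension lemma with $\mathcal{Q}=\mathcal{Q}_{i-1}$ and $w=w_i$; when $\pi_i$ is membership in $\mathcal{K}_i$ it is the property $\mathcal{D}$ with $\mathcal{F}=\mathcal{K}_i$. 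By the Extension lemma and induction on $i$, every $\mathcal{Q}_i$ can be written as $\mathcal{P}^{(i)}(M,\ldots,M)$ with $\mathcal{P}^{(i)}$ a monotone multilinear predicate on the lattice of ideals of $G$ in some finite number $t_i$ of arguments ($t_i=t_{i-1}\deg w_i$ or $t_i=t_{i-1}$, and $t_0=1$). In particular $\mathcal{Q}_n=\mathcal{P}^{(n)}(M,\ldots,M)$ with $\mathcal{P}^{(n)}$ monotone multilinear in $t\eqdef t_n$ arguments, and by construction $\mathcal{Q}_n(M)$ holds exactly when $M$ has a series $\{0\}=A_0\subseteq\ldots\subseteq A_n=M$ of ideals of $G$ with each $A_i/A_{i-1}$ enjoying $\pi_i$; thus $\mathcal{Q}_n(N)$ holds. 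Restricting $\mathcal{P}^{(n)}$ to $\mathcal{L}$ keeps it monotone and multilinear because $\mathcal{L}$ is closed under sums. It remains to check $Ф$-invariance directly: if $M$ has such a series and $\varphi\in\Aut G$, then $\{0\}=\varphi(A_0)\subseteq\ldots\subseteq\varphi(A_n)=\varphi(M)$ is again a series of ideals of $G$, and $\varphi(A_i)/\varphi(A_{i-1})\cong A_i/A_{i-1}$, so each quotient has the same property (both ``satisfies $w_i=0$'' and ``belongs to $\mathcal{K}_i$'' are isomorphism invariants); hence $\mathcal{P}^{(n)}$ is $Ф$-invariant.

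Finally I would invoke Theorem 1 with this $\mathcal{L}$, $Ф=\Aut G$, $\mathcal{P}=\mathcal{P}^{(n)}$, and $\codim$ as above: since $\mathcal{P}^{(n)}(N,\ldots,N)$ holds, there is $H\in\mathcal{L}$ with $\mathcal{P}^{(n)}(H,\ldots,H)$, with $H$ being $Ф$-invariant, and with $\codim H\le f^{t-1}(\codim N)<\infty$ by part 5). Setting $M=H$ finishes it: $G/M$ is finite-dimensional; $M$ is invariant under all automorphisms, since from $\varphi(M)\subseteq M$ and $\varphi^{-1}(M)\subseteq M$ we get $M=\varphi(\varphi^{-1}(M))\subseteq\varphi(M)\subseteq M$, i.e.\ $\varphi(M)=M$; and $\mathcal{P}^{(n)}(M,\ldots,M)$ says exactly that $M$ has a series of length $n$ whose $i$-th quotient satisfies $w_i=0$ or lies in $\mathcal{K}_i$ just as in the original series.

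I expect the main obstacle to be the bookkeeping in iterating the Extension lemma --- keeping track of the growing arity $t_i$ and verifying at each step that $\mathcal{Q}_i$ genuinely fits the template $\mathcal{C}$ or $\mathcal{D}$ (in particular that ``$M/M'$ has property $\pi_i$'' is what those templates encode and that $M'\subseteq M$ makes $M'\cap M=M'$) --- together with the easy but essential separate check that the full series predicate is $\Aut G$-invariant, since the Extension and Composition lemmas are stated without any acting semigroup.
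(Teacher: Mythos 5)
Your proposal follows essentially the same route as the paper: work on the lattice of finite-codimension ideals with $\Phi=\Aut G$, define the series property recursively, prove by induction on $n$ (via the Extension lemma) that it can be written as $\mathcal{P}(N,\ldots,N)$ with $\mathcal{P}$ monotone and multilinear, and then invoke Theorem 1. You fill in several details the paper leaves implicit (the lattice being closed under sums/intersections, the generalized codimension axioms, and the $\Phi$-invariance of the predicate), which is a welcome amount of care; the only small caveat is that your $\Phi$-invariance check is done only on the diagonal $M=N_1=\cdots=N_t$, whereas Theorem 1 needs invariance of the full multi-argument predicate $\mathcal{P}^{(n)}(N_1,\ldots,N_t)$ --- this is easily repaired by tracing invariance through the Composition and Extension lemma constructions (the building blocks $\mathcal{R}\binom{N_1,\ldots,N_t}{M}=(w(N_1,\ldots,N_t)\subseteq M)$ and $\mathcal{R}\binom{N}{M}=(N/(N\cap M)\in\mathcal{F})$ are manifestly invariant under simultaneous application of $\varphi$ to all arguments, and composition preserves this).
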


\begin{proof}[Proof]
We denote by $ U_n (N) $ the following property:\\

$N$ possesses a series
\[
\{0\} = A_0 \subseteq \ldots \subseteq A_n = N \ \ \ \ (A_i \triangleleft G)
\]
each of whose quotients  $A_i/A_{i-1}$ is either satisfies the multilinear identity $w_i = 0$, or is in the radical class  $\mathcal{K}_i$ and all these classes are also coradical simultaneously. \\

(The predicate $ U_n (N) $ is considered on the lattice of all ideals of finite codimension.) This lattice is Noetherian since if the codimension is finite, then it can not decrease indefinitely, so all increasing chains terminate).

Let us prove that $ U_n (N) $ can be written in the form $ P (N, \ldots, N) $, where $ P $ is a multilinear and monotone predicate. We shall prove by induction on the length of the series (that is, with respect to n).

The base of induction: $ n = 0 $. In the zero algebra everything is satisfied, obviously.

The inductive step. We rewrite the property $ U_n (N) $ in the form:
\begin{multline}
U_n(N) = \left( \exists \ A_{n-1} \triangleleft G, \ A_{n-1} \subseteq N: \  N/A_{n-1} \text{ is either satisfies the identity}  \ w_n = 0, \right. \\ \left. \text{or is in the} \ \mathcal{K}_n; \ \text{and} \ U_{n-1}(A_{n-1}) \right). 
\end{multline}
The property $U_{n-1}(A_{n-1})$ is the same: $A_{n-1}$ possesses a series of smaller lengths, each of whose quotients is either satisfies the identity $w_j = 0$ or is in the class  $\mathcal{K}_j$.

By the  inductive hypothesis, $U_{n-1}(A_{n-1})$ can be written in the form \\ $P'(A_{n-1}, \ldots, A_{n-1})$ where $P'$ is a multilinear and monotone predicate.

Then, by the extension lemma, we obtain that  $U_n(N)$ can be written in the form $P(N, \ldots, N)$ where $P$ is a multilinear monotone predicate of t arguments  (where $t = \prod t_i$, $t_i$ is the degree $w_i$). By Theorem 1 we obtain the assertion of the theorem about series.
\end{proof}

We obtain an analogue (for algebras) of the theorem in [3] (in which we consider the case when the group $G$ is locally finite, and every class $ \mathcal {K} _i $ is the class of all locally nilpotent groups):

\begin{sled}
\textit{Let $G$ be either an associative algebra or a Lie algebra, $N$ be an ideal in $G$, the factor algebra $ G / N $ is finite-dimensional. Let $N$ possesses a series
\[
\{0\} = A_0 \subseteq \ldots \subseteq A_n = N \ \ \ \ (A_i \triangleleft G)
\]
each of whose quotients  $A_i/A_{i-1}$ is either satisfies the multilinear identity $w_i = 0$, or is locally nilpotent. Then $G$ contains an ideal $M$ such that $ G / M $ is finite-dimensional and $M$ is invariant under all automorphisms of $G$, and $M$ has the same property (that is, a series of the same length, each quotient of which satisfies the same identity or is locally nilpotent). }\\
	\end{sled}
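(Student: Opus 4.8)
The plan is to deduce this corollary directly from the Theorem about series. That theorem allows a quotient $A_i/A_{i-1}$ of the given series either to satisfy a multilinear identity $w_i=0$ or to lie in a class $\mathcal{K}_i$, provided $\mathcal{K}_i$ is simultaneously a radical class (Definition 1) and a coradical class, i.e.\ a formation (Definition 2). The finite-dimensionality of $G/N$ is assumed in both statements, so the hypotheses match. Hence the whole corollary reduces to one verification: \emph{the class of locally nilpotent associative (resp.\ Lie) algebras is at once a radical class and a formation}. Granting this, one sets $\mathcal{K}_i$ equal to that class for each index where the quotient $A_i/A_{i-1}$ is locally nilpotent, applies the Theorem about series, and obtains the required automorphism-invariant ideal $M$ with $G/M$ finite-dimensional and carrying a series of the same length whose quotients have the same types.

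First I would check the radical-class axioms. Axiom 1 (every ideal of a locally nilpotent algebra is locally nilpotent) is immediate: finitely many elements of an ideal $I\triangleleft A$ generate a finitely generated subalgebra of $A$, which is nilpotent, so $I$ is locally nilpotent. Axiom 2 is the essential point: if $A=I_1+I_2$ with $I_1,I_2\triangleleft A$ both locally nilpotent, then $A$ is locally nilpotent. For associative algebras this is the classical fact underlying the existence of the Levitzki locally nilpotent radical; for Lie algebras it is the corresponding classical fact on the existence of a locally nilpotent radical. I would cite this or recall the standard reduction: a finitely generated subalgebra of $A$ is contained in the subalgebra generated by finitely many elements of $I_1\cup I_2$, and one passes to the finitely generated, hence nilpotent, subideals these elements generate inside $I_1$ and $I_2$, using that the sum of two nilpotent ideals of an associative (resp.\ Lie) algebra is nilpotent.

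Next I would check the formation axioms. Axiom 1 (a quotient of a locally nilpotent algebra is locally nilpotent) is immediate. For Axiom 2, assume $A/I_1$ and $A/I_2$ are locally nilpotent; the map $a+(I_1\cap I_2)\mapsto (a+I_1,\,a+I_2)$ embeds $A/(I_1\cap I_2)$ into $A/I_1\oplus A/I_2$ as a subalgebra. A finitely generated subalgebra $S$ of that direct sum lies in $B_1\oplus B_2$, where $B_j$ is the finitely generated, hence nilpotent, subalgebra of $A/I_j$ generated by the $j$-th coordinates of a generating set of $S$; since the direct sum of two nilpotent algebras is nilpotent and a subalgebra of a nilpotent algebra is nilpotent, $S$ is nilpotent. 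Hence $A/(I_1\cap I_2)$ is locally nilpotent.

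The main obstacle is Axiom 2 of the radical-class definition, especially in the Lie case: that the sum of two locally nilpotent ideals of a Lie algebra is again locally nilpotent. This is precisely where associativity or the Jacobi identity is used, and it is the reason the corollary is stated for associative and Lie algebras rather than for arbitrary non-associative algebras; every other step is routine bookkeeping on top of the Theorem about series.
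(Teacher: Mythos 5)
Your overall strategy coincides with the paper's: reduce the corollary to the Theorem about series by checking that the class of locally nilpotent associative (resp.\ Lie) algebras is simultaneously a radical class and a formation, and the verification breaks into the same four axioms. Since you explicitly offer to cite the classical existence of the Levitzki-type locally nilpotent radical for the crucial radical Axiom 2, the argument as a whole is sound.

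One caution, though, on the ``standard reduction'' you sketch as an alternative to the citation: given generators $a_i+b_i$ of a finitely generated subalgebra of $I_1+I_2$, the finitely generated subalgebras $\langle a_1,\dots,a_k\rangle\subseteq I_1$ and $\langle b_1,\dots,b_k\rangle\subseteq I_2$ are \emph{not} ideals (nor subideals) of the subalgebra they jointly generate, so the lemma that the sum of two nilpotent ideals is nilpotent does not apply to them directly; your sketch has a gap at exactly this point. The paper closes it differently in the two cases: in the Lie case it enlarges them to $U_k = I_k + \langle I_k\cdot J_k\rangle$ and $V_k = J_k + \langle I_k\cdot J_k\rangle$, which \emph{are} ideals of $\langle I_k,J_k\rangle$ and still sit inside finitely generated (hence nilpotent) subalgebras of $I$ and $J$ respectively; in the associative case it gives a Jacobson-style bound, showing that products of length $N$ from a finite set fall into $J$ and then products of length $MN$ vanish. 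On the formation side, your direct argument --- embedding $A/(I_1\cap I_2)$ into $A/I_1\oplus A/I_2$ and observing that a finitely generated subalgebra lands inside $B_1\oplus B_2$ for nilpotent $B_j$ --- is a small simplification of the paper, which instead reuses the already-proved sum-of-locally-nilpotent-ideals fact for the two coordinate ideals of the direct sum.
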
 

\begin{proof}[Proof]
	Let $ G $ be an associative algebra. We consider the class of all associative locally nilpotent algebras.
	Let us prove that this is a radical class.
	
	The ideal of a locally nilpotent algebra is locally nilpotent (as an algebra), because a locally nilpotent algebra $G$ is an algebra, each finitely generated subalgebra of which is nilpotent. Any finitely generated subalgebra in this ideal will be a subalgebra in the whole algebra $G$. Hence, it will be locally nilpotent. \\
	
	Let $I$ and $J$ be locally nilpotent ideals of the associative algebra $G$. Then $ I + J $ is a locally nilpotent ideal in $G$. We prove this.
	
	By the isomorphism theorem, we have an isomorphism:
	\[
	(I+J) / J \simeq I/(I \cap J)
	\]
	The factor algebra $I/(I \cap J)$ is locally nilpotent as a homomorphic image of a locally nilpotent algebra $I$. Consequently, the factor algebra $ (I + J) / J $ is locally nilpotent.
	
	We have: J is locally nilpotent (by assumption) and $ (I + J) / J $ is locally nilpotent. Let us prove that this implies that $ I + J $ is locally nilpotent (the proof of this fact which has given in [6] for rings is used here for algebras).
	
	We consider $A = \{x_1, x_2, \ldots, x_n \}$, it is a finite subset of $I+J$.  Let $\tilde{A} = \{ \tilde{x}_1, \tilde{x}_2, \ldots, \tilde{x}_n \}$, where $\tilde{x}_i = x_i + J$. The quotient $ (I + J) / J $ is locally nilpotent, then $\exists$ an integer $N$ such that $\tilde{x}_{i_1} \ldots \tilde{x}_{i_N} = 0$ for $i_j = 1,2, \ldots, n$. 
	
	The set $B = \{x_{i_1} \ldots x_{i_N} \ | \ i_j = 1,2,\ldots, n  \}$ is finite and is contained in $J$, and $J$ is locally nilpotent. Consequently, $\exists$ an $М$ such that the product of any M of such elements is 0. This means that the product of any $x_i$, $i = 1, \ldots, MN$  is 0. Consequently, $ I + J $ is locally nilpotent.
	
	Thus, the class of all associative locally nilpotent algebras is a radical class.
	
	Let us prove that this class is also coradical. The factor algebra of a locally nilpotent algebra is locally nilpotent (if the product of some number of elements is zeroed in the algebra itself, then this is also satisfied in the factor algebra).
	
Let's prove that if $A/I_1 \in \mathcal{K}$ and $A/I_2 \in \mathcal{K}$ then $A/(I_1 \cap I_2) \in \mathcal{K}$.
	
	We consider the homomorphism
	\[
	\begin{aligned}
	A \xrightarrow{\varphi}& \ A/I_1 \oplus A/I_2 \\
	a \mapsto& \ (a+I_1, a+ I_2)
	\end{aligned}
	\]
	$\Ker \varphi = I_1 \cap I_2$. The image of the homomorphism $ \varphi $ is isomorphic to the quotient by the kernel of this homomorphism. So 
	\[
	A/(I_1 \cap I_2) \subseteq A/I_1 \oplus A/I_2.
	\]
By hypothesis, $ A / I_1 $ and $ A / I_2 $ are locally nilpotent. We have proved above that the sum of locally nilpotent ideals is locally nilpotent.

Consequently, $ A / (I_1 \cap I_2) $ is locally nilpotent. \\
	
Let $ G $ be a Lie algebra. We consider the class of locally nilpotent Lie algebras. Let us prove that this class is radical.

The item 1) from the definition of the radical class is satisfied.
	
	Let us prove that if $ I, J $ are locally nilpotent ideals of a Lie algebra L, then $ I + J $ is locally nilpotent. It is necessary to show that any finitely generated subalgebra of $ I + J $ is nilpotent. We take a finitely generated subalgebra in $ I + J $, write its generators in the form $ a_i + b_i $, $ i = \overline {1, k} $, where $ a_i \in I, b_i \in J $. Denote $ I_k = \langle a_1, \ldots, a_k \rangle $, $ J_k = \langle b_1, \ldots, b_k \rangle $. $ I_k, J_k $ are finitely generated subalgebras of $ I, J $, respectively, they are nilpotent.
	
	Notice that \[
	\left( I_k + \langle I_k \cdot J_k \rangle \right) + \left( J_k + \langle I_k \cdot J_k \rangle\right) =  \langle I_k, J_k \rangle.
	\]
Denote \[
	U_k = I_k + \langle I_k \cdot J_k \rangle, \ \ V_k = J_k + \langle I_k \cdot J_k \rangle.
	\]
	
	Notice that $U_k \triangleleft \langle I_k, J_k \rangle$, $V_k \triangleleft \langle I_k, J_k \rangle$.
	
	Let us prove that $U_k$ and $V_k$ are nilpotent. Notice that \\ $I \cdot J \subseteq I \cap J$, т.к. $I \cdot J \subseteq I, \ I \cdot J \subseteq J$ (because $I$ and $J$ are ideals). 
	
	Denote $C = \{a_i b_j \ | \ i = \overline{1, k}, \ j = \overline{1, k} \}$. \\
	$\langle C \rangle $ is a finitely generated subalgebra and $\langle C \rangle  \subseteq I$ and $\langle C \rangle  \subseteq J$ (because $I$ and $J$ are ideals).
	
	Notice that $\langle I_k, C \rangle  \subseteq I$ (because $I$ is an ideal), and $\langle I_k, C \rangle$ is a finitely generated subalgebra. Therefore, $\langle I_k, C \rangle$ is nilpotent. But $U_k \subseteq \langle I_k, C \rangle \subseteq I$, which means $U_k$ is nilpotent. 
	
	Similarly, $V_k \subseteq \langle J_k, C \rangle \subseteq J$, then $V_k$ is nilpotent. 
	
	The sum of the nilpotent ideals of a Lie algebra is nilpotent. Thus, the class of locally nilpotent Lie algebras is a radical class. \\
	
	Let us prove that this class is also coradical. The factor algebra of a locally nilpotent algebra is locally nilpotent. 
	
	Let us prove that if  $A/I_1 \in \mathcal{K}$ and $A/I_2 \in \mathcal{K}$, then $A/(I_1 \cap I_2) \in \mathcal{K}$.
	
	We consider the homomorphism
	\[
	\begin{aligned}
	A \xrightarrow{\varphi}& \ A/I_1 \oplus A/I_2 \\
	a \mapsto& \ (a+I_1, a+ I_2)
	\end{aligned}
	\]
	$\Ker \varphi = I_1 \cap I_2$. The image of the homomorphism $\varphi$ s isomorphic to the quotient by the kernel of this homomorphism. Hence
	\[
	A/(I_1 \cap I_2) \subseteq A/I_1 \oplus A/I_2.
	\]
	By hypothesis,  $A/I_1$ and $A/I_2 $ are locally nilpotent. We have proved above that the sum of locally nilpotent ideals is locally nilpotent.
	
	Consequently,  $A/(I_1 \cap I_2)$ is locally nilpotent. Thus, the class of locally nilpotent Lie algebras is a coradical class.
	
\end{proof}

\begin{center}
	\noindent\textbf{REFERENCES} 
\end{center}

[1] \ \ Anton A. Klyachko, Maria V. Milentyeva,  Large and symmetric: The Khukhro–Makarenko theorem on laws — without laws, J. Algebra \textbf{424} (2015), 222-241.

[2] \  E. I. Khukhro, N. Yu. Makarenko, Large characteristic subgroups satisfying multilinear commutator identities, J. London Math. Soc. \textbf{75}, no. 3 (2007), 635–646.

[3] \ E. I. Khukhro, Ant. A. Klyachko, N. Yu. Makarenko, Yu. B. Melnikova, Automorphism invariance and identities, Bull. London Math. Soc. \textbf{41}, no. 5 (2009), 804–816.

[4] \   E. I. Khukhro, N. Yu. Makarenko, Automorphically-invariant ideals satisfying multilinear identities, and group-theoretic applications, J. Algebra \textbf{320}, no. 4 (2008), 1723-1740.

[5] \ N. Yu. Makarenko, P. Shumyatsky, Characteristic subgroups in locally finite groups, J. Algebra \textbf{352}, no. 1 (2012), 354-360.

[6] \  Nathan Jacobson, Structure of rings, American Mathematical Society, 1956.

\end{document}